\documentclass{amsart}

\usepackage[utf8]{inputenc}
\usepackage{mathtools,amssymb}
\usepackage{dsfont}
\usepackage{tikz}
\usetikzlibrary{calc}
\usepackage{aliascnt}
\usepackage[colorlinks=true,linkcolor=red,citecolor=green,urlcolor=magenta]{hyperref}
\usepackage[nameinlink,noabbrev]{cleveref}
\usepackage{float}
\usepackage{orcidlink}

\theoremstyle{plain}
\newtheorem{theorem}{Theorem}[section]
\newaliascnt{lemma}{theorem}
\newtheorem{lemma}[lemma]{Lemma}
\aliascntresetthe{lemma}
\theoremstyle{plain}
\newaliascnt{remark}{theorem}
\newtheorem{remark}[remark]{Remark}
\aliascntresetthe{remark}
\crefname{lemma}{lemma}{lemmas}
\Crefname{lemma}{Lemma}{Lemmas}
\crefname{remark}{remark}{remarks}
\Crefname{remark}{Remark}{Remarks}
\numberwithin{equation}{section}

\renewcommand{\MR}[1]{\href{https://mathscinet.ams.org/mathscinet-getitem?mr=#1}{MR#1}}
\newcommand{\ARXIV}[1]{\href{https://arxiv.org/abs/#1}{arXiv:#1}}

\title[Lyapunov Exponents for Passive Scalars]{Lyapunov Exponents for Passive Scalars in White-in-Time Flows}
\author{Declan Stacy \orcidlink{0009-0006-7660-948X}}
\address{EPFL, Switzerland}
\email{declan.stacy@epfl.ch}
\date{September 2026}

\newcommand{\E}{\mathbb{E}}
\newcommand{\Prb}{\mathbb{P}}
\newcommand{\R}{\mathbb{R}}
\newcommand{\Id}{\mathds{1}}
\newcommand{\C}{\mathbb{C}}
\newcommand{\dd}{\mathrm{d}}
\DeclarePairedDelimiter{\ip}{\langle}{\rangle}
\newcommand{\T}{\mathbb{T}}
\newcommand{\Z}{\mathbb{Z}}
\newcommand{\Q}{\mathbb{Q}}

\begin{document}

\begin{abstract}
We study a passive scalar on the two-dimensional torus driven by an arbitrary finite set of white-in-time divergence-free Fourier modes. We prove that the exponential $L^2$ dissipation rate of the advection--diffusion equation is uniformly bounded as the diffusivity tends to zero as long as there exist two nonparallel forced modes. If not, then the dissipation rate can be arbitrarily fast depending on the initial data, but the top Lyapunov exponent remains uniformly bounded below. The proof uses a projection onto a finite set of modes which depends on the forcing set. Part of our argument follows the structure of the four-mode case investigated by Chemnitz and Chemnitz, but uses a strictly weaker condition than their matrix inequality between Frobenius and operator norms. Combined with an accessibility argument, this allows us to consider an arbitrary set of forcing modes and initial data.

\smallskip
\noindent\textbf{Keywords:} advection-diffusion, Lyapunov exponent, Batchelor scale.

\medskip
\noindent\textbf{2020 MSC: 35Q35, 37H15, 37L30, 76F25.}
\end{abstract}

\maketitle
\section{Introduction}
Consider the following equation:
\begin{equation}\label{eq:passive-scalar}
    \partial_t \theta_t = \kappa \Delta \theta_t - u_t \cdot \nabla \theta_t \,,
\end{equation}
where $\theta_t \in L^2_0(\T^2)$, the set of real-valued square-integrable mean-zero functions on $\T^2 = [-\pi,\pi]^2$, $\kappa > 0$ is called the diffusivity, and $u_t: \T^2 \to \R^2$ is some divergence-free (meaning $\partial_1 u_1 + \partial_2 u_2 = 0$) vector field which we will specify later. One can view $u_t$ as the velocity which pushes around some fluid in space and $\theta_t(x)$ as some scalar quantity, for example the deviation of the temperature from the mean temperature, which we observe at each point $x$ in space. In this temperature example, the $\kappa \Delta$ term represents the spreading around of heat.

The ``Batchelor Scale Conjecture'' \cite{Batchelor} concerns the distribution of the Fourier mass $|\hat \theta_t(k)|^2$ for $k \in \Z^2$, where
\begin{equation}\label{eq:fourier-transform-def}
    \hat \theta_t(k) \coloneqq \frac 1{(2\pi)^2}\int e^{-ik \cdot x}\theta_t(x)dx\,.
\end{equation}
Roughly, it is conjectured that $|\hat \theta_t(k)|^2 \sim |k|^{-2}$ for $|k| \leq \kappa^{-1/2}$ (assuming a stationary regime, which is possible when energy is pumped into the system via, for example, adding additive noise to \eqref{eq:passive-scalar}). Various forms of the conjecture have been proven for certain models. For example, \cite{cumulative} proves upper and lower bounds in expectation for $\sum_{|k| \leq R} |\hat \theta_t(k)|^2$ (of order $\ln R$) for $R \leq \kappa^{-1/2}$. \cite{annuli} proves analogous upper and lower bounds except they sum over annuli as opposed to balls, and summing their lower bounds recovers the lower bounds in \cite{cumulative}.

Another version of the Batchelor Scale Conjecture concerns the exponential decay rate of $\|\theta_t\|_{L^2} \coloneqq (2\pi)^{-1}\big(\int \theta_t(x)^2\dd x\big)^{1/2}$, known as the Lyapunov exponent:
\begin{equation}\label{eq:lambda-kappa-def}
    \lambda_\kappa \coloneqq \lambda_\kappa(\theta_0) \coloneqq \lim_{t \to \infty} \frac 1t \ln \|\theta_t\|_{L^2} \,,
\end{equation}
which may depend on $\theta_0$, the initial condition.
The relationship between $\lambda_\kappa$ and the distribution of the Fourier mass is the following formula, which is obtained simply by applying the chain rule to \eqref{eq:passive-scalar}:
\begin{equation}\label{eq:ln-theta}
    \ln \|\theta_t\|_{L^2} = \ln \|\theta_0\|_{L^2} - \kappa \int_0^t \frac{\|\nabla \theta_s\|_{L^2}^2}{\|\theta_s\|_{L^2}^2}ds\,, \quad  \frac{\|\nabla \theta_s\|_{L^2}^2}{\|\theta_s\|_{L^2}^2} = \frac{\sum_{k \in \Z^2} |k|^2 |\hat \theta_s(k)|^2}{\sum_{k \in \Z^2} |\hat \theta_s(k)|^2}\,.
\end{equation}
If we expect that on average we have most of the Fourier mass around modes with $|k| \sim \kappa^{-1/2}$ (the upper bound on $|k|$ from above), then \eqref{eq:ln-theta} suggests that $-\lambda_\kappa \asymp 1$.

Uniform upper bounds for $\lambda_\kappa$ (of the form $\lambda_\kappa \leq -C$) have been proven for various choices of $u_t$ in \cite{enhanced1, Gess} (in fact they prove even stronger results about the $L^2$ norm after a time of $O(\ln \kappa^{-1})$). See also \cite{FL26} for estimates on an averaged Lyapunov exponent in a different white-in-time transport-noise regime. However, uniform lower bounds on $\lambda_\kappa$ are much rarer in the current literature. In fact, even showing $\lambda_\kappa > -\infty$ is quite hard, and this is done in \cite{projective} through a precise analysis of the projective process $\theta_t/\|\theta_t\|_{L^2}$. Building upon the analysis of the spectral median introduced in \cite{projective-0}, the authors use a Wiener chaos expansion argument to show that Fourier mass on high-energy modes will quickly transfer to low ones. Their lower bound on $\lambda_\kappa$ goes to $-\infty$ as $\kappa \to 0$. On the other hand, \cite{fourmodes} proves a uniform lower bound on $\lambda_\kappa$, but only for an extremely specific choice of $u_t$.

In particular, suppose $u_t$ is white in time and smooth in space, for example
\begin{equation}\label{eq:white-in-time}
    d\theta_t(x) = \kappa \Delta \theta_t(x)dt + \sum_{k \in I}\sigma_k e^{ik \cdot x} k^\perp \cdot \nabla \theta_t(x) \circ dW^k_t \,,
\end{equation}
where $\circ$ denotes Stratonovich integration. Above, $I \subset \Z^2 \setminus \{0\}$ is a nonempty finite symmetric set ($k \in I \implies -k \in I$), $W^k$ are complex Brownian motions with $[W^k,W^\ell]_t = t\Id_{k = -\ell}$, the constants $\sigma_k \in \R \setminus \{0\}$ satisfy $\sigma_k = -\sigma_{-k}$, and we used the notation $k^\perp = (k_1,k_2)^\perp = (k_2,-k_1)$. Note that the choice $\sigma_k = -\sigma_{-k}$ ensures that $\theta_t$ is real-valued.

In \cite[Theorem 2.1]{fourmodes} the authors consider only the case $I = \{\pm (1,0), \pm (0,1)\}$ (as well as one other example on $\T^3$). By projecting $\theta_t$ onto the subspace spanned by Fourier modes in $I$ and applying It\^{o}'s formula to the log of the norm of the projection, they are able to show that $\lambda_\kappa \geq -C$ for some $C > 0$ independent of $\kappa \in (0,1]$, provided the initial condition $\theta_0$ is not orthogonal to that subspace. They rely crucially on a certain algebraic condition \cite[(24)]{fourmodes}, and they remark that finding other choices of $I$ satisfying the condition is quite difficult \cite[Remark 2.4]{fourmodes}. In our paper we modify their argument by projecting onto the subspace spanned by Fourier modes in a set $S$, where $S$ is not necessarily equal to $I$ and can depend on the initial data. We derive a relaxed version of the condition \cite[(24)]{fourmodes} in \Cref{lem:main-lem} and describe some sets $S$ which satisfy the relaxed condition. By constructing suitable sets $S$, we prove a result which applies to arbitrary sets $I$ and initial data $\theta_0$:
\begin{theorem}\label{thm:main}
    Let $I \subset \Z^2 \setminus \{0\}$ be an arbitrary nonempty finite symmetric set. Then there is some constant $C > 0$ (depending only on $I$ and $\sigma_k$) such that, for all $\kappa \in (0,1]$ and nonzero initial conditions $\theta_0 \in L_0^2(\T^2)$, the solution to \eqref{eq:white-in-time} satisfies:
    \begin{enumerate}
        \item If $I$ contains two nonparallel vectors, then
        \begin{equation}\label{eq:first-thm-assertion}
            \lambda_\kappa(\theta_0) \geq -C \quad \text{a.s.}
        \end{equation}
        \item If not, then $\operatorname{Span}_\Z(I) = q\Z$ for some $q \in \Z^2 \setminus \{0\}$. Let $\operatorname{supp} \hat \theta_0$ denote
        $\big\{n \in \Z^2 \setminus \{0\} \mid \hat \theta_0(n) \neq 0\big\}$. Then
        \begin{equation}\label{eq:second-thm-assertion}
        \begin{aligned}
            \lambda_\kappa(\theta_0) \geq -\inf\bigg(&\big\{C(q^\perp \cdot n)^2 \,\big|\, n \in \operatorname{supp} \hat \theta_0 \text{ and } q^\perp \cdot n \neq 0\big\} \\
       \cup\, &\big\{\kappa|n|^2 \,\big|\, n \in \operatorname{supp} \hat \theta_0 \text{ and } q^\perp \cdot n = 0\big\}\bigg) \quad \text{a.s.}
        \end{aligned}
        \end{equation}
    \end{enumerate}
\end{theorem}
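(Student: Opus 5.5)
The plan is to follow the strategy of \cite{fourmodes}: pick a finite set $S \subset \Z^2\setminus\{0\}$, let $P_S$ be the orthogonal projection onto the Fourier modes in $S$, and apply It\^o's formula to $\ln\|P_S\theta_t\|_{L^2}$. Since $\|\theta_t\|_{L^2}\geq\|P_S\theta_t\|_{L^2}$, a lower bound $\liminf_t \frac1t\ln\|P_S\theta_t\|\geq -C$ gives $\lambda_\kappa\geq -C$. In Fourier variables, \eqref{eq:white-in-time} couples $\hat\theta(n)$ to the modes $\hat\theta(n-k)$ for $k\in I$, with coefficients $\sigma_k (k^\perp\cdot n)$.

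\textbf{It\^o computation.} The It\^o drift of $\ln\|P_S\theta\|$ splits into three parts:
\begin{itemize}
\item the diffusion term $-\kappa\sum_{n\in S}|n|^2|\hat\theta(n)|^2/\|P_S\theta\|^2\geq -\kappa\max_S|n|^2$;
\item the Stratonovich-to-It\^o correction, which contains a positive part from mass flowing into $S$ from outside;
\item minus half the quadratic variation, which is a quadratic form in $P_S\theta$ divided by $\|P_S\theta\|^4$.
\end{itemize}
I expect \Cref{lem:main-lem} to give exactly this: under an algebraic condition on $(S,I)$ (the relaxed Frobenius/operator-norm inequality), the drift restricted to $S$ is bounded below by $-C$ uniformly in the direction $P_S\theta/\|P_S\theta\|$, with the out-of-$S$ contributions having a favorable sign. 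The martingale part has bounded quadratic variation rate, so the strong law for martingales gives $\frac1t M_t\to 0$ a.s. The conclusion then holds provided $P_S\theta_t\neq 0$ for all times, or at least almost surely at some time after which we restart the argument.

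\textbf{Case (1).} I would construct $S$ from two nonparallel $k,\ell\in I$, for instance a small symmetric configuration such as $\{\pm k,\pm\ell\}$ or $\{\pm k,\pm\ell,\pm(k\pm\ell)\}$, chosen so that the condition of \Cref{lem:main-lem} holds. Since $S$ is fixed, the constant depends only on $I$ and the $\sigma_k$.

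The difficulty is initial data with $P_S\theta_0=0$. Here I would use an accessibility argument. With two nonparallel forcing modes, $\operatorname{Span}_\Z I$ has finite index, so from any $n\in\operatorname{supp}\hat\theta_0$ a chain of steps in $I$ leads to some lattice translate. Each step has a nonzero coefficient $k^\perp\cdot n$, which must be checked along the path. The argument would then go as follows:
\begin{itemize}
\item I would show that $P_{S'}\theta_t\neq0$ for all $t>0$ a.s., where $S'$ is a translated copy of $S$ chosen in the relevant coset; $S$ may be taken to depend on $\theta_0$.
\item This would follow from a support-type or analyticity argument: the solution's Fourier coefficients are real-analytic functionals of the noise, and they are nonzero by a chaos-expansion perturbation.
\item Alternatively, I would restart at a positive time, since the limit is time-shift invariant.
\end{itemize}
Uniformity of $C$ over cosets requires \Cref{lem:main-lem} to hold with constants invariant under translating $S$. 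I expect this step, together with verifying the algebraic condition for general nonparallel pairs, to be the main obstacle. My first attempt would be to check that the quadratic form depends only on differences within $S$ and on $k^\perp\cdot n$ through bounded factors. If it does not, I would choose $S$ near the origin within each coset, since there are finitely many cosets.

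\textbf{Case (2).} Now every forced mode is a multiple of $q$, and $q^\perp\cdot n$ is conserved along the line $n+q\Z$. For $n$ with $q^\perp\cdot n=0$, the transport coefficient $k^\perp\cdot n$ vanishes for $k\parallel q$, so $\hat\theta(n)$ evolves by pure diffusion. This gives the bound $-\kappa|n|^2$.

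For $q^\perp\cdot n=m\neq0$, the dynamics on the line $n+q\Z$ is the 1D problem
\[
d\hat\theta(n+jq)=\dots+\sum_{k}\sigma_k\,(k^\perp\cdot(n+jq))\,\hat\theta(n+jq-k)\circ dW^k .
\]
Since $k=aq$, we have $k^\perp\cdot(n+jq)=a\,m$, so the line is invariant and the noise strength scales like $m$. Applying the projection method with $S$ a short segment of the line, say the two points $\{n,n+q\}$, I would obtain a drift bounded below by $-Cm^2$ minus a $\kappa$-term. That $\kappa$-term, $\kappa|n+jq|^2$, would be handled by translating $S$ to a segment near the point of the line closest to the origin. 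There $|n+jq|\lesssim |m|/|q|+|q|$, so $\kappa|n+jq|^2\lesssim m^2$ for $\kappa\leq1$, and it is absorbed into the constant. Since the line is invariant and contains the support point $n$, accessibility along it reduces to the same nonvanishing argument as in case (1). Finally, take the infimum over $n\in\operatorname{supp}\hat\theta_0$.
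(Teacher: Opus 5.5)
There is a genuine gap at the centre of the argument. You never produce a set $S$ for which the second drift term in \eqref{eq:d-ln-pi} is nonnegative, and the candidates you name fail for general $I$.

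Suppose some $n\in S$ and $j\in I$ satisfy $j^\perp\cdot n\neq0$ and $n\pm j\notin S$. Then the correction can be made arbitrarily negative. For example, take $I=\{\pm(1,0),\pm(0,1),\pm(1,3)\}$, $S=\{\pm(1,0),\pm(0,1)\}$, $n=(1,0)$ and $j=(1,3)$. Put $v_{\pm n}=a,\bar a$, $v_{n-j}=b$, $v_{n+j}=-\bar b a/\bar a$ (plus conjugates at the negatives), and zero elsewhere. With $c=3\sigma_{(1,3)}$ one finds $\|P_Sv\|^2=2|a|^2$, $\|\tilde A(v)\|_{\mathrm{Fr}}^2=4c^2|b|^2$ and $\|\tilde A(v)^*P_Sv\|^2=8c^2|a|^2|b|^2$. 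So the correction equals $-c^2|b|^2/|a|^2$, which is unbounded below. Essentially the same computation defeats $\{\pm k,\pm\ell,\pm(k\pm\ell)\}$. In case (2) it also defeats your two-point segment as soon as $I$ contains some $\pm aq$ with $|a|\geq2$.

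The missing idea is the closure property of \Cref{lem:S-prop}: if $n\in S$, $k\in I$, $k^\perp\cdot n\neq0$ and $n-k\notin S$, then $n+k\in S$. This property does three things:
\begin{enumerate}
\item The out-of-$S$ parts of columns $k$ and $-k$ of $\tilde A(v)$ have disjoint supports and equal norms.
\item The in-$S$ part drops out of $\tilde A(v)^*P_Sv$ by an antisymmetry cancellation.
\item Bessel's inequality for each orthogonal pair $\pm k$ then gives \eqref{eq:nonneg-ito-correction}.
\end{enumerate}
The sets \eqref{eq:S-def}, built from all $0/1$ subset sums of the whole of $I$ around $\pm m$, have this property for every $m$. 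That is what makes ``translating $S$'' legitimate. The remaining constant $\sup_S(\kappa|n|^2+c_n/2)$ is then controlled as you suggest: take $m$ among finitely many coset representatives in case (1), or $m=n+rq$ minimizing $|m|$ in case (2).

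Two further steps are not right as stated.

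First, the quadratic variation rate of your martingale is $\|\tilde A(v)^*P_Sv\|^2/\|P_Sv\|^4$. This is of order $\|v\|^2/\|P_Sv\|^2$ and is not bounded; in the example above it equals $2c^2|b|^2/|a|^2$. So the strong law gives you no $M_t/t\to0$, and hence no $\liminf$ bound. The paper proves only $\limsup_t M_t/t\geq0$, which holds for every continuous local martingale by Dambis--Dubins--Schwarz and recurrence of Brownian motion. It then invokes the multiplicative ergodic theorem for the compact cocycle (\Cref{lem:met}) to know that $\lambda_\kappa$ is a genuine limit. It also needs a stopping-time and Markov-inequality argument showing that $P_Sv(t)$ never vanishes once it is nonzero.

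Second, your accessibility step is only sketched (chaos expansion or analyticity) and does not handle steps with $k^\perp\cdot n=0$. The paper argues directly. Along an edge $(n,n+k)$ with $k^\perp\cdot n\neq0$ and $\hat\theta_0(n+k)=0$, the process $e^{c_{\kappa,n+k}t}\hat\theta_t(n+k)$ is a continuous local martingale whose bracket is strictly positive for $t>0$, so it cannot stay at zero. Chaining this with the Markov property shows every mode of $(n+\operatorname{Span}_\Z I)\setminus\{0\}$ is reached immediately. When $k^\perp\cdot n=0$, it detours $n\to n+\ell\to n+\ell+k\to n+k$ through a nonparallel $\ell\in I$. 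The only exception is the degenerate case $q^\perp\cdot n=0$, where $\hat\theta_t(n)=e^{-\kappa|n|^2t}\hat\theta_0(n)$ gives the $\kappa|n|^2$ bound you already have.
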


\begin{remark}\label{rem:top-exp}
    Note that \Cref{thm:main} also implies the weaker statement that the top Lyapunov exponent, defined as $\lim_{t \to \infty} \frac 1t \ln \|\Phi(t,\omega)\|$ (where $\Phi(t,\omega)$ is as in \Cref{lem:met}), is bounded below by $-C$ (more precisely $-C|q|^2$ in the second case) for almost every $\omega$.
\end{remark}

\begin{figure}[H]
\centering
\begin{tikzpicture}[x=0.51cm,y=0.51cm,
    every node/.style={font=\small}]

\coordinate (O) at (0,0);
\coordinate (m) at (0,4);
\coordinate (n) at (10,8);

\coordinate (p) at ({40/29},{16/29});

\foreach \x in {-7,-6,...,15}{
  \foreach \y in {-4,-3,...,11}{
    \fill[black!28] (\x,\y) circle (1.0pt);
  }
}

\draw[black,line width=1.55pt]
  (-8,-3.2) -- (16,6.4);

\foreach \r in {-1,0,1,2}{
  \fill[black]
    ({5*\r},{2*\r}) circle (1.85pt);
}

\draw[blue!10,line width=1.75pt]
  (-8,0.8) -- (16,10.4);

\draw[blue!30,line width=1.85pt]
  (-7.5,1.0) -- (7.5,7.0);

\draw[blue!55,line width=1.95pt]
  (-5,2.0) -- (5,6.0);

\draw[blue!85!black,line width=2.05pt]
  (-2.5,3.0) -- (2.5,5.0);

\foreach \r in {-1,0,1,2}{
  \fill[blue!70!black]
    ({5*\r},{4+2*\r}) circle (1.85pt);
}

\fill[black] (O) circle (2.45pt);
\fill[black] (m) circle (2.75pt);
\fill[black] (n) circle (2.75pt);
\fill[black] (p) circle (1.75pt);

\draw[black,densely dotted,line width=1.15pt]
  (m) -- (O);

\draw[black,densely dotted,line width=1.15pt]
  (m) -- (p);

\coordinate (ra) at ($(p)+({0.42*5/sqrt(29)},{0.42*2/sqrt(29)})$);
\coordinate (rb) at ($(ra)+({0.42*(-2)/sqrt(29)},{0.42*5/sqrt(29)})$);
\coordinate (rc) at ($(p)+({0.42*(-2)/sqrt(29)},{0.42*5/sqrt(29)})$);

\draw[black,line width=0.8pt]
  (ra) -- (rb) -- (rc);

\node[below left=5pt] at (O) {$0$};
\node[above left=6pt] at (m) {$m$};
\node[above left=5pt] at (n) {$n$};

\node[black,above=7pt]
  at (13.2,5.28) {$q\mathbb Z$};

\node[blue!70!black,above=7pt]
  at (12.1,8.84) {$n+q\mathbb Z$};

\node[black,anchor=west]
  at (0.88,2.35)
  {$\sim |q^\perp\!\cdot n|$};

\end{tikzpicture}

\caption{The drawing above illustrates the first case of \eqref{eq:second-thm-assertion}. In particular, if $\operatorname{supp} \hat \theta_0$ is contained on the lines $\pm n + q\Z$ where $q^\perp \cdot n \neq 0$, then \eqref{eq:dv-eq-1} shows that the Fourier mass will remain supported on those lines for all $t$. The heuristic interpretation of \eqref{eq:second-thm-assertion} is that eventually the Fourier mass will distribute along the lines $\pm n + q\Z$, as depicted by the blue shading. In particular, there will eventually be mass on mode $m$, where $m$ minimizes $|m|^2$, enough so that the dissipation rate $-\lambda_\kappa$ is comparable to $|m|^2 \sim (q^\perp \cdot m)^2 = (q^\perp \cdot n)^2$. On the other hand, when $\operatorname{supp} \hat \theta_0 = \{\pm n\}$ where $q^\perp \cdot n = 0$, meaning $n + q\Z \subset q\R$, the mass does not distribute and thus we get the dissipation rate $\kappa |n|^2$ (see \Cref{rem:failure-to-be-uniform}).}
\label{fig:rank-one-coset}
\end{figure}

\begin{remark}\label{rem:failure-to-be-uniform}
    When $\operatorname{Span}_\Z(I) = q\Z$, $\lambda_\kappa$ can be arbitrarily negative depending on the initial data. For example, when $\theta_0(x) = 2\cos(n \cdot x)$ where $q^\perp \cdot n = 0$, from \eqref{eq:dv-eq-1} we have the exact formula $v_n(t) = e^{-\kappa |n|^2 t}$ (note that $c_n = 0$ and thus the It\^{o} term vanishes as well), and so $\lambda_\kappa = -\kappa|n|^2$. Thus, it is impossible to strengthen \Cref{thm:main} to say that $\lambda_\kappa \geq -C$ regardless of $I$.
\end{remark}

\begin{remark}
   If we impose the additional condition that $\operatorname{Span}_\Z(I) = \Z^2$, there is an even simpler proof of $\lambda_\kappa \geq -C$, which we discuss in \Cref{rem:alternative-proof}. In particular, there is a weighted tweak to the projection considered in \cite{fourmodes} which allows \cite[(24)]{fourmodes} to hold regardless of $I$. For arbitrary $I$ this alternative proof also works to show the uniform lower bound on the top Lyapunov exponent discussed in \Cref{rem:top-exp}. However, this construction does not imply \Cref{thm:main} for arbitrary $I$ --- for example it would not work for $I = \{\pm(2,0),\pm(0,2)\}$ --- because \Cref{thm:main} concerns arbitrary initial data, and this alternative proof requires $|\hat \theta_0(n)| \neq 0$ for some $n \in \operatorname{Span}_\Z(I)$. 
\end{remark}

\begin{remark}
    If $0 \neq \theta_0 \in L^2_0(\T^2)$ is fixed, then \Cref{thm:main} implies that there is some $C(\theta_0) \in (0,\infty)$ such that, for any sequence $\kappa_n \downarrow 0$,
    \[\liminf_{n \to \infty} \lambda_{\kappa_n}(\theta_0) \geq -C(\theta_0) \quad \text{a.s.}\,,\] which is not immediately obvious from \eqref{eq:white-in-time}.
\end{remark}

In \Cref{sec:key-estimate} we prove a quantitative lower bound on $\lambda_\kappa(\theta_0)$ under the assumption $P_S\theta_0 \neq 0$, where $P_S$ is a suitable projection depending on $I$ and $\operatorname{supp} \hat \theta_0$ (\Cref{lem:old-thm}). In \Cref{sec:proof-section} we finish the proof of \Cref{thm:main} with an accessibility argument. In \Cref{sec:appendix} we recall a martingale limit theorem and a version of the multiplicative ergodic theorem which are used in the proof.

\section{Key Estimate}\label{sec:key-estimate}
Consider sets $S$ of the form
\begin{equation}\label{eq:S-def}
    S \coloneqq \Bigg\{\pm m+\sum_{k \in I} a_k k \,\Bigg|\, a_k \in \{0,1\}\Bigg\} \setminus \{0\}
\end{equation}
for some $m \in \Z^2$. The goal of this section is to prove the following quantitative lower bound on $\lambda_\kappa$:
\begin{lemma}\label{lem:old-thm}
      Let $I \subset \Z^2 \setminus \{0\}$ be an arbitrary nonempty finite symmetric set and $S$ be as in \eqref{eq:S-def} for some $m \in \Z^2$. Then for all $\kappa \in (0,1]$ and all initial conditions $\theta_0 \in L_0^2(\T^2)$ such that $S \cap \operatorname{supp} \hat \theta_0$ is nonempty, the solution to \eqref{eq:white-in-time} satisfies
    \begin{equation}\label{eq:lambda-kappa}
        \lambda_\kappa(\theta_0) \geq -\sup_{n \in S} \Bigg(\kappa |n|^2 + \frac 12 \sum_{k \in I} \sigma_k^2(k^\perp \cdot n)^2\Bigg) \quad \text{a.s.}
    \end{equation}
\end{lemma}

The only property of $S$ we will use (besides being finite and symmetric) is the following:
\begin{lemma}\label{lem:S-prop}
For all $k = (k_1,k_2) \in I$ and $n \in S$,
    \[n - k \notin S \text{ and } k^\perp \cdot n \neq 0 \implies n + k \in S\,,\]
    where $k^\perp$ denotes $(k_2,-k_1)$.
\end{lemma}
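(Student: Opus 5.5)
The plan is to argue directly from the definition \eqref{eq:S-def}, by looking at the coefficient $a_k$ in a representation of $n$. Since $I$ is symmetric, $k$ and $-k$ are distinct elements of $I$ with independent coefficients $a_k, a_{-k} \in \{0,1\}$. Only the coefficient attached to the specific element $k$ matters here.

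Fix $k \in I$ and $n \in S$ with $n - k \notin S$ and $k^\perp \cdot n \neq 0$. By \eqref{eq:S-def} we write $n = \varepsilon m + \sum_{j \in I} a_j j$ for some sign $\varepsilon \in \{\pm 1\}$ and coefficients $a_j \in \{0,1\}$. First note that $k^\perp \cdot k = 0$. Hence the hypothesis $k^\perp \cdot n \neq 0$ rules out $n = k$ and $n = -k$, that is, $n - k \neq 0$ and $n + k \neq 0$. This is the only role of the orthogonality hypothesis: it ensures that the removal of $\{0\}$ in \eqref{eq:S-def} never interferes.

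We then split into two cases according to $a_k$.
\begin{itemize}
\item If $a_k = 1$, set $a'_j = a_j$ for $j \neq k$ and $a'_k = 0$. Then $n - k = \varepsilon m + \sum_j a'_j j$ with all $a'_j \in \{0,1\}$. Since $n - k \neq 0$, this gives $n - k \in S$, contradicting the hypothesis. So this case cannot occur.
\item If $a_k = 0$, set $a'_k = 1$ and keep the other coefficients. Then $n + k = \varepsilon m + \sum_j a'_j j$. Since $n + k \neq 0$, we get $n + k \in S$, as desired.
\end{itemize}

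There is no genuine obstacle. The only points needing care are that representations of $n$ need not be unique (the argument works with any one fixed representation, so this does not matter), and that the exclusion of $0$ from $S$ is handled by $k^\perp \cdot n \neq 0$.
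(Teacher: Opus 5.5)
Your proof is correct and matches the paper's argument: fix one representation $n = \pm m + \sum_{j\in I} a_j j$ and split on whether $a_k = 0$ or $a_k = 1$. As in the paper, you use $k^\perp \cdot n \neq 0$ only to rule out $n \pm k = 0$, so that the removal of $\{0\}$ from $S$ causes no trouble.
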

\begin{proof}
    Write $n = d_mm + \sum_{j \in I} a_j j$ where $a_j \in \{0,1\}$ and $d_m \in \{\pm 1\}$. Suppose $n - k \notin S$ and $k^\perp \cdot n \neq 0$. If $a_k = 0$, then $n + k = d_m m+\sum_{j \in I} (a_j + \Id_{j = k})j \in S$ unless $n + k = 0$, but this would contradict $k^\perp \cdot n \neq 0$. If instead $a_k = 1$, then $n - k = d_m m + \sum_{j \in I} (a_j - \Id_{j = k})j \notin S$ implies that $n -k = 0$, which again contradicts $k^\perp \cdot n \neq 0$.
\end{proof}

Recall \eqref{eq:fourier-transform-def} and set $v_n(t) \coloneqq \hat \theta_t(n)$. Then \eqref{eq:white-in-time} can be rewritten as
\begin{equation}\label{eq:dv-eq-1}
    \begin{aligned}
       dv_n &= -\kappa|n|^2v_n dt + i\sum_{k \in I} \sigma_k(k^\perp \cdot n)v_{n-k} \circ dW^k \\
       &= \Bigg[-\kappa|n|^2v_n - \frac 12 c_nv_n\Bigg]dt + i\sum_{k \in I} \sigma_k(k^\perp \cdot n)v_{n-k}  dW^k\,,
    \end{aligned}
\end{equation}
where $c_n \coloneqq \sum_{k \in I} \sigma_k^2(k^\perp \cdot n)^2$ and the second line is written in It\^{o} form.
Noticing that the coefficients in front of $dW^k$ are linear in $v$, we rewrite \eqref{eq:dv-eq-1} as
\begin{equation}\label{eq:dv-eq-2}
    dv_n = \Bigg[-\kappa|n|^2v_n - \frac 12 c_nv_n\Bigg] dt + (A(v) dW)_n\,,
\end{equation}
where formally $A(v): H_I \to H$, $W = (W^k)_{k \in I} \in H_I$, $H \coloneqq H_{\Z^2 \setminus \{0\}}$, and for any nonempty symmetric $J \subset \Z^2 \setminus \{0\}$ we define the real Hilbert space
\begin{equation}\label{eq:h-i-def}
    H_J \coloneqq \bigg\{w \in \ell_2(J;\C) \,\bigg|\, w_k = \overline{w_{-k}} \text{ for all } k \in J \bigg\}
\end{equation}
with inner product $\ip{w,v} \coloneqq \sum_{k \in J} \overline{w_k}v_k$. Defining $(A(v)x)_n \coloneqq \sum_{k \in I} A(v)_{n,k}x_k$, we can express $A(v)$ via $A(v)_{n,k} = i\sigma_k(k^\perp \cdot n)v_{n-k}$.

Denote the projection from $H$ to $H_S$ by $P_S$, so $P_Sv \coloneqq (v_n)_{n \in S} \in H_S$, and let $\tilde A(v) \coloneqq P_SA(v)$. By applying It\^{o}'s formula to \eqref{eq:dv-eq-2} we obtain
\begin{equation}\label{eq:d-ln-pi}
    \begin{aligned}
    d\ln \|P_S v\|
    &= \left[-\kappa\frac{\sum_{n \in S} |n|^2|v_n|^2}{\sum_{n \in S}|v_n|^2}-\frac{1}{2}\frac{\sum_{n \in S} c_n|v_n|^2}{\sum_{n \in S}|v_n|^2}\right]dt \\
    &+\frac{\|P_S v\|^2\|\tilde A(v)\|_{\mathrm{Fr}}^2
    -2\left\|\tilde A(v)^*P_S v\right\|^2}{2\|P_S v\|^4}
   dt
    +\frac{\ip{P_S v,\tilde A(v)\,dW}}{\|P_S v\|^2} \,,
\end{aligned}
\end{equation}
where $\|\cdot\|_{\mathrm{Fr}}$ denotes the Frobenius norm $\|A\|_{\mathrm{Fr}}^2 = \sum_{n \in S,k \in I} |A_{n,k}|^2$.

The key ingredient of the proof of \Cref{lem:old-thm} is the following:
\begin{lemma}\label{lem:main-lem}
    For all $v \in H$ with $P_S v \neq 0$, the second term in \eqref{eq:d-ln-pi} is nonnegative:
    \begin{equation}\label{eq:nonneg-ito-correction}
        \|P_S v\|^2\|\tilde A(v)\|_{\mathrm{Fr}}^2
     \geq 2\left\|\tilde A(v)^*P_S v\right\|^2\,.
    \end{equation}
\end{lemma}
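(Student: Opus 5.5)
The plan is to reduce \eqref{eq:nonneg-ito-correction} to a family of scalar Cauchy--Schwarz inequalities, one for each pair $\{k,-k\}\subset I$, and then sum them. The symmetry of $S$ and \Cref{lem:S-prop} should supply the extra factor $2$.

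\textbf{Step 1: explicit formulas.} First I would compute both sides in coordinates. With the convention $[W^k,W^\ell]_t=t\Id_{k=-\ell}$ and the real structure of $H_I$, the quadratic variation of $\ip{P_Sv,\tilde A(v)\,dW}$ gives $\|\tilde A(v)^*P_Sv\|^2=\sum_{k\in I}|b_k|^2$ (up to the normalisation fixed by that convention, which I would check once), where
\[
b_k \coloneqq \sigma_k\sum_{n\in S}(k^\perp\cdot n)\,v_n\overline{v_{n-k}}\,.
\]
The Frobenius norm is $\|\tilde A(v)\|_{\mathrm{Fr}}^2=\sum_{k\in I}F_k$ with $F_k\coloneqq\sigma_k^2\sum_{n\in S}(k^\perp\cdot n)^2|v_{n-k}|^2$.

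Using $\sigma_{-k}=-\sigma_k$, $k^\perp\cdot(n-k)=k^\perp\cdot n$ and $S=-S$, a reindexing $n\mapsto n-k$ should show that
\[
\overline{b_{-k}}=\sigma_k\sum_{n:\,n-k\in S}(k^\perp\cdot n)\,v_n\overline{v_{n-k}}\,.
\]
Hence $b_k$ and $\overline{b_{-k}}$ are sums of the same terms $\tau_n\coloneqq\sigma_k(k^\perp\cdot n)v_n\overline{v_{n-k}}$, taken over $n\in S$ and over $n-k\in S$ respectively. Likewise $F_k+F_{-k}=\sigma_k^2\sum_{n\in S}(k^\perp\cdot n)^2\big(|v_{n-k}|^2+|v_{n+k}|^2\big)$.

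\textbf{Step 2: the pairwise inequality.} It then suffices to prove, for each $k\in I$,
\[
2\big(|b_k|^2+|b_{-k}|^2\big)\le \|P_Sv\|^2\,(F_k+F_{-k})\,,
\]
and sum over $k$; each unordered pair is counted twice on both sides, so the sum gives \eqref{eq:nonneg-ito-correction}. To prove it, split the index set into three parts:
\begin{itemize}
\item $P=\{n: n\in S,\ n-k\in S\}$, contributing a common term $X=\sum_{n\in P}\tau_n$;
\item $L=\{n\in S: n-k\notin S\}$, contributing $Y_L$;
\item $R=\{n: n-k\in S,\ n\notin S\}$, contributing $Y_R$.
\end{itemize}
Then $b_k=X+Y_L$ and $\overline{b_{-k}}=X+Y_R$. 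Terms with $k^\perp\cdot n=0$ vanish. For the others, \Cref{lem:S-prop} says that $n\in L$ forces $n+k\in S$, so $(n+k,n)$ is a pair in $P$; symmetrically, each element of $R$ is attached to a pair in $P$ one step further down the line $n+k\Z$. Thus along each line $n_0+k\Z$ the contributing indices form runs of consecutive indices. Each run starts with an $L$-index, has interior $P$-indices, and ends with an $R$-index, and no $L$- or $R$-index is isolated.

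Next I would apply Cauchy--Schwarz to $|X+Y_L|^2+|X+Y_R|^2$ run by run, pairing the weight $(k^\perp\cdot n)^2|v_{n-k}|^2$ against $|v_n|^2$. The key point is that $v_n$ with $n$ in the interior of a run appears in both $F_k$ (through $n+k$) and $F_{-k}$ (through $n-k$). Each $|v_n|^2$ from $\|P_Sv\|^2$ is therefore available twice, which is exactly where the factor $2$ should come from.

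\textbf{Main obstacle.} The hard step is this bookkeeping. I must check that the endpoint indices of each run, where $v_{n-k}$ or $v_{n+k}$ lies outside $S$, do not destroy the factor $2$. \Cref{lem:S-prop} is precisely what rules out a run consisting of a single boundary term. I would first try the elementary bound $2|\alpha+\beta|^2\le 4|\alpha|^2+4|\beta|^2$ combined with $2xy\le x^2+y^2$ applied term by term to $|\tau_n|$. If that is too lossy, I would fall back on an exact Cauchy--Schwarz along each run with weights chosen so that the $P$-terms are charged half to $F_k$ and half to $F_{-k}$.
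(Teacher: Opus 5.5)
Your reduction to the pairs $\{k,-k\}$ and your formulas for $b_k$, $\overline{b_{-k}}$ and $F_k+F_{-k}$ are correct. Step~2, however, has a genuine gap. You treat the common term $X=\sum_{n\in P}\tau_n$ as something to be estimated, but the inequality only holds because $X=0$ identically. Indeed, $n\mapsto k-n$ maps $P$ to itself (by $S=-S$), and $\tau_{k-n}=\sigma_k\,(k^\perp\cdot(k-n))\,v_{k-n}\overline{v_{-n}}=-\tau_n$ because $v_{-n}=\overline{v_n}$. Hence $X=-X$; this is the paper's identity $A_2(v)^*P_Sv=0$.

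Without this cancellation, neither of your two strategies can give the factor $2$. Any bound built from the triangle inequality, $2xy\le x^2+y^2$ and weighted Cauchy--Schwarz depends only on the moduli $|v_n|$. It therefore also bounds the phase-aligned sums $\sum|\tau_n|$, and those violate the pairwise inequality. For example, take $I=\{\pm(1,0)\}$ and $m=(0,1)$, so $S=\{(j,\pm1):|j|\le1\}$, and set $v_{(j,\pm1)}=1$ for $|j|\le1$, all other coefficients $0$. Then $L$ and $R$ contribute nothing, and $\|P_Sv\|^2(F_k+F_{-k})=6\cdot 8\sigma_k^2=48\sigma_k^2$. But $\sum_{n\in S}|\tau_n|=\sum_{n-k\in S}|\tau_n|=4|\sigma_k|$, so the phase-blind left-hand side is $2(16+16)\sigma_k^2=64\sigma_k^2$.

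The true $b_{\pm k}$ vanish only because $\tau_{(0,1)}+\tau_{(1,1)}=-2\sigma_k$ cancels $\tau_{(0,-1)}+\tau_{(1,-1)}=2\sigma_k$, i.e.\ terms on the two different lines $\pm(0,1)+k\Z$. In general $n\mapsto k-n$ sends $n_0+k\Z$ to $-n_0+k\Z$, which is a different line whenever $k^\perp\cdot n_0\neq0$. So an argument that works run by run along single lines cannot see the cancellation at all. Your heuristic that the factor $2$ comes from interior indices being ``available twice'' is not the right mechanism.

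Once $X=0$ is in hand, the proof is short, and this is the paper's route. Set $S_L=\{n\in S: n-k\notin S,\ k^\perp\cdot n\neq0\}$ and $S_R=\{m\in S: m+k\notin S,\ k^\perp\cdot m\neq0\}$. Then $b_k=Y_L=\sum_{n\in S_L}\sigma_k(k^\perp\cdot n)v_n\overline{v_{n-k}}$, and after substituting $m=n-k$, $\overline{b_{-k}}=Y_R=\sum_{m\in S_R}\sigma_k(k^\perp\cdot m)v_{m+k}\overline{v_m}$.

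\Cref{lem:S-prop} is exactly what gives $S_L\cap S_R=\emptyset$. The reflection $n\mapsto-n$ shows that $a_k^2\coloneqq\sigma_k^2\sum_{n\in S_L}(k^\perp\cdot n)^2|v_{n-k}|^2$ equals $\sigma_k^2\sum_{m\in S_R}(k^\perp\cdot m)^2|v_{m+k}|^2$. Cauchy--Schwarz on each sum then yields $|b_k|^2+|b_{-k}|^2\le a_k^2\big(\sum_{n\in S_L}|v_n|^2+\sum_{m\in S_R}|v_m|^2\big)\le a_k^2\|P_Sv\|^2$.

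Finally, $2a_k^2\le F_k+F_{-k}$, since $2a_k^2$ is precisely the part of $F_k+F_{-k}$ indexed by the endpoints; the interior part of the Frobenius norm is simply discarded. So the factor $2$ comes from the disjointness of the two endpoint sets together with the equality of the endpoint weights.
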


\begin{proof}
First decompose $\tilde A(v) = A_1(v) + A_2(v)$, defined for $(n,k) \in S \times I$ by
\begin{align*}
    (A_1(v))_{n,k} &\coloneqq i\sigma_k(k^\perp \cdot n)v_{n-k}\Id_{n - k \notin S} \\
    (A_2(v))_{n,k} &\coloneqq i\sigma_k(k^\perp \cdot n)v_{n-k}\Id_{n - k \in S}\,.
\end{align*}
Then $A_1(v)_{n,k}A_2(v)_{n,k} = 0$ for all $(n,k)$, so
\begin{equation}\label{eq:frob-norm-bd}
    \|\tilde A(v)\|_{\mathrm{Fr}}^2 = \|A_1(v)\|_{\mathrm{Fr}}^2 + \|A_2(v)\|_{\mathrm{Fr}}^2 \geq \|A_1(v)\|_{\mathrm{Fr}}^2\,.
\end{equation}
We also have $A_2(v)^*P_Sv = 0$. Indeed, first replacing $n$ with $-n$ and then $n$ with $n-k$, using $v_n = \overline v_{-n}$ throughout, gives
\begin{align*}
    (A_2(v)^* P_S v)_k
    &= -i\sigma_k \sum_{n \in S} (k^\perp \cdot n)\overline v_{n-k}v_n\Id_{n-k \in S} = i\sigma_k \sum_{n\in S} (k^\perp \cdot n) v_{n+k}\overline v_n\Id_{n+k \in S} \\
    &= i\sigma_k \sum_{n-k \in S} (k^\perp \cdot n)v_n\overline v_{n-k} \Id_{n \in S} = i\sigma_k \sum_{n \in S} (k^\perp \cdot n)v_n\overline v_{n-k} \Id_{n-k \in S}\,,
\end{align*}
and notice that the second and fifth expressions are negatives. Thus,
\begin{equation}\label{eq:adjoint-bd-1}
    \begin{aligned}
        2\left\|\tilde A(v)^*P_S v\right\|^2 &= 2\|A_1(v)^*P_Sv\|^2 \\
        &= \sum_{k \in I}\Big(|\ip{A_1(v)_k, P_S v}|^2 + |\ip{A_1(v)_{-k}, P_S v}|^2\Big)\,,
    \end{aligned}
\end{equation}
where $A_1(v)_k$ denotes the $k$th column of $A_1(v)$ and for the last equality recall that $I$ is symmetric. Next, note that by \Cref{lem:S-prop} we have
\begin{equation*}
    (A_1(v))_{n,k} \neq 0 \implies  n-k \notin S \text{ and } k^\perp \cdot n \neq 0 \implies n+k \in S \implies (A_1(v))_{n,-k} = 0\,,
\end{equation*}
so we conclude that $\ip{A_1(v)_k, A_1(v)_{-k}} = 0$. Replacing $n$ by $-n$ and using the symmetry of $S$ and $v_n = \overline v_{-n}$, we also obtain 
\begin{align*}
    \|A_1(v)_k\|^2 &= \sigma_k^2\sum_{n \in S}(k^\perp \cdot n)^2|v_{n-k}|^2\Id_{n - k \notin S} \\
    &= \sigma_k^2\sum_{n \in S}(k^\perp \cdot n)^2|v_{-n-k}|^2\Id_{-n - k \notin S} = \|A_1(v)_{-k}\|^2\,.
\end{align*}
For $k$ such that $\|A_1(v)_k\| \neq 0$, we conclude that
\[
A_1(v)_k/\|A_1(v)_k\| \quad\text{and}\quad A_1(v)_{-k}/\|A_1(v)_k\|
\]
are orthonormal, so
\begin{equation}\label{eq:adjoint-bd-2}
    \begin{aligned}
         \sum_{k \in I}\Big(|\ip{A_1(v)_k, P_S v}|^2 + |\ip{A_1(v)_{-k}, P_S v}|^2\Big) &\leq \sum_{k \in I} \|A_1(v)_k\|^2\|P_Sv\|^2 \\
         &= \|A_1(v)\|_{\mathrm{Fr}}^2\|P_Sv\|^2\,.
    \end{aligned}
\end{equation}
Combining \eqref{eq:frob-norm-bd}--\eqref{eq:adjoint-bd-2} yields \eqref{eq:nonneg-ito-correction}.
\end{proof}

\begin{remark}\label{rem:frobenius-failure}
The stronger Frobenius/operator-norm condition used in \cite[(24)]{fourmodes} need not hold for our choice of $S$, even for their four-mode forcing $I = \{\pm(1,0), \pm (0,1)\}$. For $m = 0$, the set in \eqref{eq:S-def} is
\[
S = \{\pm(1,0),\pm(0,1),\pm(1,1),\pm(1,-1)\},
\]
and for ease of computation take $\sigma_k = \Id_{k_1 + k_2 > 0} - \Id_{k_1 + k_2 < 0}$.

Consider $v\in H$ given by
\[
v_{(0,1)}=v_{(0,-1)}=1,
\qquad
v_{(2,-1)}=i,
\qquad
v_{(-2,1)}=-i,
\]
with all other coefficients equal to zero. Recall that $\tilde A(v)_{n,k} = i\sigma_k(k^\perp\cdot n)v_{n-k}$, which is $0$ when $k \in \{\pm(0,1)\}$, $n \in S$ because none of the sums $(0,1)\pm(0,1)$ or $(2,-1)\pm(0,1)$ lie in $S$. $\tilde A(v)_{n,k}$ is also $0$ when $k,n \in I$ since none of the sums $(0,1)\pm(1,0)$ or $(2,-1)\pm(1,0)$ lie in $I$. Thus, we may restrict ourselves to $k \in \{\pm(1,0)\}$ and $n \in \{\pm(1,1),\pm(1,-1)\}$ to obtain the $4\times 2$ matrix
\[
\begin{aligned}
&
\begin{pmatrix}
i\sigma_{(1,0)}\big((1,0)^\perp\cdot(1,1)\big)v_{(0,1)}
&
i\sigma_{(-1,0)}\big((-1,0)^\perp\cdot(1,1)\big)v_{(2,1)}
\\[2mm]
i\sigma_{(1,0)}\big((1,0)^\perp\cdot(1,-1)\big)v_{(0,-1)}
&
i\sigma_{(-1,0)}\big((-1,0)^\perp\cdot(1,-1)\big)v_{(2,-1)}
\\[2mm]
i\sigma_{(1,0)}\big((1,0)^\perp\cdot(-1,1)\big)v_{(-2,1)}
&
i\sigma_{(-1,0)}\big((-1,0)^\perp\cdot(-1,1)\big)v_{(0,1)}
\\[2mm]
i\sigma_{(1,0)}\big((1,0)^\perp\cdot(-1,-1)\big)v_{(-2,-1)}
&
i\sigma_{(-1,0)}\big((-1,0)^\perp\cdot(-1,-1)\big)v_{(0,-1)}
\end{pmatrix}
\\[3mm]
&\hspace{3cm}
=
\begin{pmatrix}
-i & 0\\
 i & -1\\
-1 & -i\\
 0 & i
\end{pmatrix}\,.
\end{aligned}
\]
Consequently, $\|\tilde A(v)\|_{\mathrm{Fr}}^2 = 6$. On the other hand, let $x\in H_I$ be defined by
\[
x_{(1,0)}=\frac{1+i}{2},
\qquad
x_{(-1,0)}=\frac{1-i}{2},
\qquad
x_{\pm (0,1)}=0.
\]
Then $\|x\|=1$, while
\[
\tilde A(v)x
=
\frac12
\begin{pmatrix}
1-i\\
-2+2i\\
-2-2i\\
1+i
\end{pmatrix}
\]
on the four rows displayed above. Hence $\|\tilde A(v)x\|^2=5$ and therefore
\[2\|\tilde A(v)\|_{\mathrm{op}}^2\ge 10 > 6 = \|\tilde A(v)\|_{\mathrm{Fr}}^2\,,\]
which violates \cite[(24)]{fourmodes}. Notice, however, that $(P_Sv)_n = 0$ for $n \notin \{\pm (0,1)\}$, and we noted above that $\tilde A(v)_{n,k} = 0$ for $n \in \{\pm (0,1)\}$, hence $\tilde A(v)^*P_Sv=0$. Thus, our weaker inequality \eqref{eq:nonneg-ito-correction} holds trivially for this $v$.
This demonstrates the key distinction between \cite[(24)]{fourmodes}, which controls $\|\tilde A(v)^*w\|$ uniformly over all $w\in H_S$, and \eqref{eq:nonneg-ito-correction}, which only looks at the particular vector $w=P_Sv$.
\end{remark}

\begin{remark}\label{rem:alternative-proof}
    The stronger Frobenius/operator-norm condition used in \cite[(24)]{fourmodes} does hold for a different choice of weighted projection, which simplifies our proof in the case where $\operatorname{Span}_\Z(I) = \Z^2$. Indeed, instead of $P_S: H \to H_S$, consider the map $P: H \to H_I$ given by $Pv = (i\sigma_n v_n)_{n \in I} \in H_I$. With $\tilde A(v) \coloneqq PA(v)$, we have $(\tilde A(v))_{n,k} = i\sigma_n [i \sigma_k (k^\perp \cdot n)v_{n-k}] = -\sigma_n\sigma_k(k^\perp \cdot n)v_{n-k}$. Since $k^\perp \cdot n = -n^\perp \cdot k$ and $\overline{v_{n-k}} = v_{k-n}$, we have in fact $\overline{(\tilde A(v))_{n,k}} = -(\tilde A(v))_{k,n}$. Thus, $\tilde A(v)$ is skew-symmetric as an operator on $H_I \cong \R^{|I|}$ and so its singular values come in pairs $r_1,r_1,r_2,r_2,\dots,r_{|I|/2},r_{|I|/2}$. Since $\|\tilde A(v)\|_{\mathrm{op}}^2 = \max_i |r_i|^2$ and $\|\tilde A(v)\|_{\mathrm{Fr}}^2 = 2\sum_{i=1}^{|I|/2} r_i^2$, we indeed have $\|\tilde A(v)\|_{\mathrm{Fr}}^2 \geq 2\|\tilde A(v)\|_{\mathrm{op}}^2$. Repeating the argument below thus gives a lower bound on $\lambda_\kappa$ whenever $Pv(0) \neq 0$. Since \Cref{lem:acc} and \Cref{lem:rm-characterization} below show that almost surely there is some $t$ such that $Pv(t) \neq 0$, this proves \eqref{eq:first-thm-assertion} for all nonzero initial data. However, if $\operatorname{Span}_\Z(I) \neq \Z^2$ then we may have $Pv(t) = 0$ for all $t \geq 0$ (for example when $I = \{\pm(2,0),\pm(0,2)\}$ and $\theta_0(x) = \cos(x_1)$).
\end{remark}

Finally, we prove \Cref{lem:old-thm} by following the proof of \cite[Theorem 2.1]{fourmodes}, using \Cref{lem:main-lem} in place of \cite[(24)]{fourmodes}:

\begin{proof}
Let $\kappa \in (0,1]$ and $\theta_0 \in L^2_0(\T^2)$ be such that $S \cap \operatorname{supp} \hat \theta_0$ is nonempty, which is equivalent to $P_Sv(0) \neq 0$. We may assume $\|\theta_0\|_{L^2} = 1$ without loss of generality. Letting $\tau = \inf\{t \geq 0 \mid P_Sv(t) = 0\}$, we have by \eqref{eq:d-ln-pi} for all $t < \tau$ that
\begin{align}
        \ln \|P_Sv(t)\| &- \ln\|P_Sv(0)\| \notag\\
        &= - \int_0^t \frac{\sum_{n \in S}(\kappa |n|^2 + c_n/2)|v_n(s)|^2}{\sum_{n \in S}|v_n(s)|^2}\dd s + A_t +M_t\,,\label{eq:long-ln-formula}
\end{align}
where $A_t \geq 0$ by \Cref{lem:main-lem} and $M_t$ is a continuous local martingale on $[0,\tau)$ with
\begin{equation}\label{eq:quad-var}
    \langle M\rangle_t = \int_0^t \frac{\|\tilde A(v(s))^*P_Sv(s)\|^2}{\|P_Sv(s)\|^4}\dd s \lesssim \int_0^t \frac{1}{\|P_Sv(s)\|^2}\dd s
\end{equation}
(the bound on $\tilde A(v)$ follows from $|v_n(t)| \leq \|\theta_0\|_{L^2} = 1$, which is true by \eqref{eq:ln-theta}). By $\|P_Sv(t)\| \leq \|\theta_t\|_{L^2} \leq \|\theta_0\|_{L^2} = 1$, \eqref{eq:ln-theta}, $A_t \geq 0$, and \eqref{eq:long-ln-formula} we have (for all $t < \tau$)
\begin{equation}\label{eq:almost-done}
      0 \geq  \ln \|\theta_t\|_{L^2} \geq \ln \|P_Sv(t)\| 
       \geq \ln \|P_Sv(0)\|- C_\kappa t + M_t \,,
\end{equation}
where $C_\kappa \coloneqq \sup_{n \in S} \big(\kappa |n|^2 + \frac 12 \sum_{k \in I} \sigma_k^2(k^\perp \cdot n)^2\big)$. Next we show that $\tau = \infty$, so that \eqref{eq:almost-done} holds for all $t$. Indeed, let $\tau_N \coloneqq \inf\{ t\geq 0 \mid \|P_Sv(t)\| \leq e^{-N}\}$. Then by \eqref{eq:quad-var} we have $\E[M_{t \wedge \tau_N}] = 0$, so by \eqref{eq:almost-done}
\begin{align*}
    \Prb(\tau_N \leq t) &= \Prb\Big(-\ln \|P_Sv(t \wedge \tau_N)\| \geq N\Big) \\
    &\leq \frac{\E\big[-\ln \|P_Sv(t \wedge \tau_N)\|\big]}{N} \leq \frac{C_\kappa t - \ln \|P_Sv(0)\|}{N}\,,
\end{align*}
where we used $-\ln \|P_Sv(t \wedge \tau_N)\| \geq 0$. Thus, $\Prb(\tau_N \leq t) \to 0$ as $N \to \infty$, so $\sup_N \tau_N = \infty$ almost surely. Since $\tau \geq \sup_N \tau_N$, we indeed have $\tau = \infty$.

Finally, by \eqref{eq:almost-done} and \Cref{lem:martingale}, almost surely
\[\limsup_{t \to \infty} \frac{1}{t} \ln \|\theta_t\|_{L^2} \geq -C_\kappa\,,\]
which by \Cref{lem:met} proves \eqref{eq:lambda-kappa}.
\end{proof}

\section{Proof of \texorpdfstring{\Cref{thm:main}}{Theorem 1.1}}\label{sec:proof-section}
We wish to apply \Cref{lem:old-thm} to prove \Cref{thm:main}, so we must choose $m$ in \eqref{eq:S-def} such that $S \cap \operatorname{supp} \hat \theta_0$ is nonempty. Since $m \in S$, the naive choice would be to choose an $m \in \operatorname{supp} \hat \theta_0$ which minimizes
\[\sup_{n \in S} \Bigg(\kappa |n|^2 + \frac 12 \sum_{k \in I} \sigma_k^2(k^\perp \cdot n)^2\Bigg)\,.\]
However, this does not yield a lower bound which is uniform in $\theta_0$ because $\kappa|m|^2$ may be arbitrarily large. This idea can be improved by an accessibility lemma:
\begin{lemma}\label{lem:acc}
    Define an undirected graph on $\Z^2 \setminus \{0\}$ with edge set
    \[E \coloneqq \big\{(n,n+k) \mid n \in \Z^2 \setminus \{0\}\,, k \in I\,, k^\perp \cdot n \neq 0\big\}\,,\]
    and for $n \in \Z^2 \setminus \{0\}$ let $R(n)$ denote the connected component containing $n$. Let $\theta_0 \in L^2_0(\T^2) \setminus\{0\}$. Then for all $n \in \operatorname{supp} \hat \theta_0$ and $m \in R(n)$, the solution to \eqref{eq:white-in-time} satisfies
    \[\Prb\big(\tau_m = 0\big) = 1\,, \quad \text{where} \quad \tau_m \coloneqq \inf\big\{t \geq 0 \,\big|\, m \in \operatorname{supp} \hat \theta_t\big\}\,.\]
\end{lemma}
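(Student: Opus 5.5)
The plan is to argue by contradiction along a path in the graph $(\Z^2\setminus\{0\},E)$, propagating ``vanishing on a time interval'' backwards from $m$ to $n$ through the quadratic variation of the Fourier coefficients. Since $\tau_m=0$ a.s.\ is equivalent to $\Prb\big(v_m(s)=0 \text{ for all } s\in[0,1/j]\big)=0$ for every $j\in\mathbb{N}$ (a countable intersection), it suffices to fix $t>0$ and show that the event $\Omega_t(m)\coloneqq\{v_m(s)=0 \ \forall s\in[0,t]\}$ is null.

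First we record the structure of edges. If $(n',n'+k)\in E$ with $k\in I$ and $k^\perp\cdot n'\neq 0$, then $k^\perp\cdot(n'+k)=k^\perp\cdot n'\neq0$, and $-k\in I$ since $I$ is symmetric. Hence for any edge $\{a,b\}$ we can write $a=b-k$ with $k\in I$ and $k^\perp\cdot b\neq 0$. Choose a path $n=n_0,n_1,\dots,n_L=m$ in $E$. For each step $n_{j-1}=n_j-k_j$ we then have $k_j^\perp\cdot n_j\neq0$.

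The key step is the following claim: if $m'\in\Z^2\setminus\{0\}$ and $k\in I$ with $k^\perp\cdot m'\neq0$, then almost surely on $\Omega_t(m')$ we also have $v_{m'-k}(s)=0$ for all $s\in[0,t]$. To prove it, use the Itô form \eqref{eq:dv-eq-1}: $v_{m'}(s)=v_{m'}(0)+\int_0^s b_r\,\dd r+N_s$, where $b$ is the (continuous) drift and $N$ is the continuous local martingale $N_s=i\sum_{k\in I}\sigma_k(k^\perp\cdot m')\int_0^s v_{m'-k}\,\dd W^k$. On $\Omega_t(m')$, the process $N$ restricted to $[0,t]$ equals $-\int_0^\cdot b_r\,\dd r$, so it has finite variation there. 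Consequently its quadratic variation $[N,\overline N]$ vanishes on $[0,t]$; localizing with the stopping time $\inf\{s: v_{m'}(s)\neq0\}$ makes this rigorous. Using $[W^k,W^\ell]_s=s\Id_{k=-\ell}$, $\overline{W^k}=W^{-k}$, $\sigma_{-k}=-\sigma_k$ and $v_{-n}=\overline{v_n}$, we compute
\[
[N,\overline N]_s=\int_0^s\sum_{k\in I}\sigma_k^2(k^\perp\cdot m')^2|v_{m'-k}(r)|^2\,\dd r .
\]
This is a sum of nonnegative terms, so each $|v_{m'-k}|^2$ with $k^\perp\cdot m'\neq0$ vanishes for a.e.\ $r\in[0,t]$. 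Continuity of $r\mapsto v_{m'-k}(r)=\hat\theta_r(m'-k)$ then upgrades this to all of $[0,t]$.

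Applying the claim successively along the path from $n_L=m$ down to $n_0=n$ shows that, almost surely, $\Omega_t(m)\subset\{v_n(0)=0\}$. But $n\in\operatorname{supp}\hat\theta_0$ means $v_n(0)\neq0$, so $\Prb(\Omega_t(m))=0$, which is what we needed.

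The main obstacle is the bookkeeping in the quadratic-variation step. One must treat complex Brownian motions with the conjugation constraint correctly, so that the cross terms collapse to the diagonal sum displayed above. One also needs the semimartingale representation of each single coefficient $v_{m'}$ and the continuity of $t\mapsto\hat\theta_t(n)$. I would take both of these from the well-posedness of \eqref{eq:white-in-time}: each coefficient is a finite-dimensional projection of a continuous $L^2$-valued solution, and the bound $|v_n|\le1$ makes all the stochastic integrals genuine martingales after localization.
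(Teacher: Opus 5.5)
Your argument is correct, but it organizes the proof differently from the paper.

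The paper proves only the one-edge case directly. If $v_n(0)\neq 0$ and $m=n+k$ with $k^\perp\cdot n\neq0$, then $N_t=e^{c_{\kappa,m}t}v_m(t)$ has $[N,\overline N]_t>0$ for all $t>0$, so $v_m$ cannot vanish on any $[0,t]$. It then walks \emph{forward} along the path, restarting via the Markov property at rational times with the events $A_i=\{\exists t\in[0,\epsilon)\cap\Q : n_i\in\operatorname{supp}\hat\theta_t\}$, and lets $\epsilon\downarrow0$.

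You use the same bracket identity, but contrapositively and pathwise on a single fixed window $[0,t]$. Identical vanishing of $v_{m'}$ there forces identical vanishing of every neighbor $v_{m'-k}$ with $k^\perp\cdot m'\neq0$. You then chain finitely many such almost-sure inclusions \emph{backward} from $m$ to $n$.

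What your route buys:
\begin{itemize}
\item It dispenses with the Markov property of the $L^2$-valued flow and with the rational-time bookkeeping. It needs only the It\^{o} equation for finitely many coefficients and their continuity.
\item It yields a slightly stronger structural fact: almost surely, for every rational $t$, the set of modes vanishing identically on $[0,t]$ is a union of connected components.
\end{itemize}
The paper's forward formulation, in turn, mirrors the restart-at-$\theta_s$ argument it reuses in the proof of \Cref{thm:main}.

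One simplification of your key step: the It\^{o} drift of $v_{m'}$ is $-(\kappa|m'|^2+c_{m'}/2)v_{m'}$. So on $\Omega_t(m')$ the drift vanishes and $N\equiv0$ on $[0,t]$ outright. Stopping at $\sigma=\inf\{s: v_{m'}(s)\neq0\}\wedge t$ gives $N^\sigma\equiv0$, hence $[N,\overline N]_\sigma=0$, with no need for the finite-variation detour.
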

\begin{proof}
     First we show the claim for $m = n+k$ where $k \in I$ and $k^\perp \cdot n = k^\perp \cdot m \neq 0$. In other words, $(n,m) \in E$. By \eqref{eq:dv-eq-1} we have
    \[v_m(t) = e^{-c_{\kappa,m}t}v_m(0) + i\sum_{\ell \in I}\sigma_\ell(\ell^\perp \cdot m)\int_0^t e^{-c_{\kappa,m}(t-s)} v_{m-\ell}(s)dW^\ell_s\,,\]
    where $c_{\kappa,m} \coloneqq \kappa|m|^2 + c_m/2 > 0$ and recall the notation $v_m(t) = \hat \theta_t(m)$. If $v_m(0) \neq 0$ then $\tau_m = 0$. Otherwise $N_t \coloneqq e^{c_{\kappa,m}t}v_m(t)$ is a continuous local martingale with
    \[[N, \overline N]_t = \sum_{\ell \in I} \sigma_\ell^2 (\ell^\perp \cdot m)^2 \int_0^t e^{2c_{\kappa,m}s}|v_{m-\ell}(s)|^2\dd s\,.\]
    Using our assumption $n \in \operatorname{supp} \hat \theta_0$, which means $v_n (0)= v_{m-k}(0) \neq 0$, almost surely $[N, \overline N]_t > 0$ for all $t > 0$, and thus $\Prb(N_s = 0 \text{ for all }s \leq t) = 0$, which proves the claim.

     For a general $m \in R(n)$ there exist edges $(n_0,n_1),\dots,(n_{N-1},n_N) \in E$ such that $n_0 = n$ and $n_N = m$.
    For $\epsilon > 0$ and $i \in \{0,\dots,N\}$, assume
    \begin{equation}\label{eq:prb-equals-1}
         \Prb(A_i) \coloneqq \Prb\big(\exists t \in [0,\epsilon) \cap \Q \text{ such that } n_i \in \operatorname{supp} \hat \theta_{t}\big) = 1 \,,
     \end{equation}
    which is true for $i = 0$ since $n_0 = n \in \operatorname{supp} \hat \theta_0$. By the $(n,m) \in E$ case proved above, the Markov property, and continuity of $t \mapsto \hat \theta_t(n_i)$, we have $\Prb(A_{i+1}^c \cap \{n_i \in \operatorname{supp} \hat \theta_{t}\}) = 0$ for all $t \in [0,\epsilon) \cap \Q$. In other words, $n_i \in \operatorname{supp} \hat \theta_{t}$ implies $A_{i+1}$ almost surely. Combined with \eqref{eq:prb-equals-1}, this shows $\Prb(A_{i+1}) = 1$, and thus $\Prb(A_N) = 1$. Taking $\epsilon \downarrow 0$ gives $\Prb(\tau_m = 0) = 1$.
\end{proof}
Next we characterize $R(n)$:
\begin{lemma}\label{lem:rm-characterization}
    Let $n \in \Z^2 \setminus \{0\}$.
    If $\operatorname{Span}_\Z(I) = q\Z$ for some $q \in \Z^2 \setminus \{0\}$ and $q^\perp \cdot n = 0$, then $R(n) = \{n\}$.
    Otherwise, $R(n) = (n + \operatorname{Span}_\Z(I)) \setminus \{0\}$.
\end{lemma}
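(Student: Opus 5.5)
The plan is to handle the easy inclusion and the degenerate cases directly, and then, in the nondegenerate case, to show that any formal walk through $\operatorname{Span}_\Z(I)$ can be rerouted into a genuine path in the graph of \Cref{lem:acc}.

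\textbf{Preliminary observations.} First I record that edges preserve the relevant quantities. If $(n,n+k)\in E$, then $k^\perp\cdot(n+k)=k^\perp\cdot n\neq 0$. Since $I$ is symmetric, this makes $E$ genuinely undirected. It also gives $n+k\neq 0$, because $n+k=0$ would force $k^\perp\cdot n=0$. Since every edge changes the endpoint by an element of $I$, we get $R(n)\subset (n+\operatorname{Span}_\Z(I))\setminus\{0\}$ in all cases. I will use throughout the following fact: for $m\neq 0$ and $k\in I$, the step $m\to m+k$ is an edge if and only if $m$ is not parallel to $k$.

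\textbf{Rank-one case.} Suppose $\operatorname{Span}_\Z(I)=q\Z$, so every $k\in I$ is a nonzero multiple of $q$.
\begin{itemize}
\item If $q^\perp\cdot n=0$, then $k^\perp\cdot n=0$ for all $k\in I$. Hence $n$ has no edges and $R(n)=\{n\}$.
\item If $q^\perp\cdot n\neq 0$, then every $m\in n+q\Z$ satisfies $q^\perp\cdot m=q^\perp\cdot n\neq 0$. So $m\neq 0$ and $k^\perp\cdot m\neq 0$ for all $k\in I$, which means every step by an element of $I$ is an edge. Since $I$ generates $q\Z$, the whole line $n+q\Z$ is connected.
\end{itemize}

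\textbf{Rank-two case.} Now assume $I$ contains nonparallel $k_1,k_2$.

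The first key step is a detour lemma: if $m\neq 0$, $k\in I$ and $m+k\neq 0$, then $m$ and $m+k$ lie in the same component. If $m\not\parallel k$ the step is already an edge. Otherwise pick $j\in\{k_1,k_2\}$ not parallel to $k$ and use the path
\[m\to m+j\to m+j+k\to m+k.\]
The three steps are edges because:
\begin{itemize}
\item $j^\perp\cdot m\neq0$, since $m$ is a nonzero multiple of the direction of $k$ and $j\not\parallel k$;
\item $k^\perp\cdot(m+j)=k^\perp\cdot j\neq 0$;
\item $j^\perp\cdot(m+j+k)=j^\perp\cdot(m+k)\neq 0$, since $m+k$ is nonzero and parallel to $k$.
\end{itemize}
The intermediate points are nonzero by the remark in the first paragraph.

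It follows that any walk $n=p_0,p_1,\dots,p_r=m'$ with increments in $I$ that avoids $0$ lifts to a path in the graph.

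\textbf{Avoiding the origin (main obstacle).} It remains to produce, for a given $m'\in(n+\operatorname{Span}_\Z(I))\setminus\{0\}$, a walk from $n$ to $m'$ avoiding $0$. Start from any word $m'-n=\sum_i \ell_i$ with $\ell_i\in I$, whose partial sums $p_i$ might hit $0$. Choose $j\in\{k_1,k_2\}$ not parallel to $n$, and an integer $N$ with $|N|$ large. Then consider the walk that
\begin{enumerate}
\item goes from $n$ to $n+Nj$ by steps of $\pm j$,
\item follows the shifted partial sums $p_i+Nj$,
\item returns from $m'+Nj$ to $m'$ by steps of $\mp j$.
\end{enumerate}
The points $n+tj$ are never $0$ since $j\not\parallel n$. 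For $|N|$ large, $p_i+Nj\neq 0$ for all of the finitely many $i$. The points $m'+tj$, for $t$ between $0$ and $N$, can vanish for at most one value $t=t_0$, and only if $m'\parallel j$. Since $t_0\neq 0$ because $m'\neq0$, choosing the sign of $N$ opposite to $t_0$ avoids it. This gives $m'\in R(n)$, completing the proof.
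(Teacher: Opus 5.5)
Your proof is correct and follows the paper's route. The rank-one cases are handled identically, and your detour $m\to m+j\to m+j+k\to m+k$ is exactly the paper's path $n\to n+\ell\to n+\ell+k\to n+k$. The one addition is your explicit shift-by-$Nj$ argument showing that a walk with increments in $I$ can be chosen to avoid the origin. The paper leaves that step implicit when it says it ``suffices'' to show $n+k\in R(n)$ for all $k\in I\setminus\{-n\}$, so your version is, if anything, more complete.
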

\begin{proof}
    If $\operatorname{Span}_\Z(I) = q\Z$ for some $q \in \Z^2 \setminus \{0\}$ and $q^\perp \cdot n = 0$, then for all $k \in I$ we have $k^\perp \cdot n = 0$, so there are no edges involving $n$ and thus $R(n) = \{n\}$.
    
    If $\operatorname{Span}_\Z(I) = q\Z$ for some $q \in \Z^2 \setminus \{0\}$ and $q^\perp \cdot n \neq 0$, then for all $m \in n + \operatorname{Span}_\Z(I)$ and $k \in I \subset q\Z$ we have $k^\perp \cdot m = k^\perp \cdot n \neq 0$, so $(m,m+k) \in E$, thus $R(n) = n + \operatorname{Span}_\Z(I)$.

    The only remaining case is where $\operatorname{Span}_\Z(I)$ cannot be written as $q\Z$, and here it suffices to show that $n + k \in R(n)$ for all $k \in I \setminus \{-n\}$. If $k^\perp \cdot n \neq 0$ then $(n,n+k) \in E$, and thus $n + k \in R(n)$. Otherwise, $k^\perp \cdot n = 0$, and thus $n = rk$ for some $r \in \R \setminus \{0,-1\}$. Let $\ell \in I$ be such that $k^\perp \cdot \ell \neq 0$ (such an $\ell$ exists since otherwise $\operatorname{Span}_\Z(I) \subset k\R$), and note that $\ell^\perp \cdot n = -r(k^\perp \cdot \ell) \neq 0$. Then $(n,n+\ell) \in E$. Also, $(n+\ell,n+\ell+k) \in E$ since $k^\perp \cdot (n + \ell) = k^\perp \cdot \ell \neq 0$. Since $I$ is symmetric we have also $-\ell \in I$, and so $(n + \ell + k, n +k) \in E$ because $(-\ell)^\perp \cdot (n + \ell + k) = -(r+1)(\ell^\perp \cdot k) \neq 0$. This shows $n + k \in R(n)$, as desired.
\end{proof}
Using \Cref{lem:old-thm} and the two lemmas above, we prove \Cref{thm:main}:
\begin{proof}
    If $I$ contains two nonparallel vectors, then by \Cref{lem:rm-characterization} the set $\big\{R(n) \mid n \in \Z^2 \setminus \{0\}\big\}$ is finite. Indeed, $\operatorname{Span}_\Z(I)$ is a finite-index subgroup of $\Z^2$, so each $R(n) = (n + \operatorname{Span}_\Z(I)) \setminus \{0\}$ corresponds to a coset. Thus, we may choose $m_1,\dots,m_N \in \Z^2 \setminus \{0\}$ such that $R(n) \cap \{m_1,\cdots,m_N\}$ is nonempty for all $n \in \Z^2 \setminus \{0\}$. Suppose $s \geq 0$ and $m_j \in \operatorname{supp} \hat \theta_s$. Let $S = S_j$ be as in \eqref{eq:S-def} with $m = m_j$, and note that $m_j \in S$. The Markov property and \Cref{lem:old-thm} applied at the initial condition $\theta_s$ imply, almost surely on the event $\{m_j \in \operatorname{supp} \hat \theta_s\}$, that
    \begin{equation}\label{eq:first-case}
        \lambda_\kappa(\theta_0) \geq -\sup_{n \in S} \Bigg(\kappa |n|^2 + \frac 12 \sum_{k \in I} \sigma_k^2(k^\perp \cdot n)^2\Bigg)\,.
    \end{equation}
    Since $\kappa \in (0,1]$ and $S$ is chosen from finitely many finite sets $S_1,\dots,S_N$, the right-hand side is uniformly bounded below by $-C$, where $C$ depends only on $I$ and $\sigma_k$. By \Cref{lem:acc}, $\cup_{j = 1}^N \cup_{s \in [0,1] \cap \Q} \{m_j \in \operatorname{supp} \hat \theta_s\}$ occurs almost surely, and thus so does \eqref{eq:first-case}, which proves \eqref{eq:first-thm-assertion}.

    Next suppose $\operatorname{Span}_\Z(I) = q\Z$ for some $q \in \Z^2 \setminus \{0\}$ and $n \in \operatorname{supp} \hat \theta_0$ and $q^\perp \cdot n = 0$. By \eqref{eq:dv-eq-1} and $k^\perp \cdot n = 0$ for all $k \in I$, we have that $v_n(t) = e^{-\kappa |n|^2 t} v_n(0)$, and thus
    \begin{equation}\label{eq:kappa-n-squared}
        \lambda_\kappa(\theta_0) \geq -\kappa |n|^2 \quad \text{a.s.}
    \end{equation}

    If $\operatorname{Span}_\Z(I) = q\Z$ for some $q \in \Z^2 \setminus \{0\}$ and $n \in \operatorname{supp} \hat \theta_0$ and instead $q^\perp \cdot n \neq 0$, then by \Cref{lem:rm-characterization} there exists some $m \in R(n)$ such that $k^\perp \cdot m = k^\perp \cdot n$ for all $k \in I$ and $|m|^2 \lesssim (q^\perp \cdot n)^2$. Indeed, we may choose $m = n + rq$ where $r \in \Z$ minimizes $|m|^2$. Let $S$ be as in \eqref{eq:S-def} with this choice of $m$. As above, the Markov property and \Cref{lem:old-thm} imply, almost surely on the event $\{m \in \operatorname{supp} \hat \theta_t\}$, that
    \[\lambda_\kappa(\theta_0) \geq -\sup_{\ell \in S} \Bigg(\kappa |\ell|^2 + \frac 12 \sum_{k \in I} \sigma_k^2(k^\perp \cdot \ell)^2\Bigg)\,.\]
    Since $\kappa \in (0,1]$ and we have, uniformly over $\ell \in S$ and $k \in I$, that $|\ell|^2 \lesssim |m|^2 \lesssim (q^\perp \cdot n)^2$ and $(k^\perp \cdot \ell)^2 = (k^\perp \cdot m)^2 = (k^\perp \cdot n)^2 \lesssim (q^\perp \cdot n)^2$, we conclude that
    \begin{equation}\label{eq:third-case}
        \lambda_\kappa(\theta_0) \geq -C(q^\perp \cdot n)^2\,,
    \end{equation}
    where $C$ depends only on $I$ and $\sigma_k$. By \Cref{lem:acc}, almost surely there is some $t \in [0,1] \cap \Q$ such that $m \in \operatorname{supp} \hat \theta_t$, so \eqref{eq:third-case} holds almost surely. Along with \eqref{eq:kappa-n-squared}, this proves \eqref{eq:second-thm-assertion}.
\end{proof}

\appendix
\crefalias{section}{appendix}
\section{Limit Theorems}\label{sec:appendix}
Below we recall two facts which are essential to the proof of \Cref{thm:main}:

\begin{lemma}\label{lem:martingale}
Let $M_t$ be a continuous real local martingale defined for all $t\ge0$ with $M_0=0$. Then
\begin{equation}\label{eq:martingale-limsup}
 \limsup_{t\to\infty}\frac{M_t}{t}\ge0
 \qquad\text{almost surely}\,.
\end{equation}
\end{lemma}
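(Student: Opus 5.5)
The plan is to reduce to a Brownian motion via the Dambis--Dubins--Schwarz time change and split according to whether the quadratic variation $\langle M\rangle_\infty \coloneqq \lim_{t\to\infty}\langle M\rangle_t$ is finite or infinite. By the Dambis--Dubins--Schwarz theorem, on a possibly enlarged probability space there is a standard Brownian motion $B$ with $M_t = B_{\langle M\rangle_t}$ for all $t \ge 0$. The two events $\{\langle M\rangle_\infty < \infty\}$ and $\{\langle M\rangle_\infty = \infty\}$ are then handled separately.

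\textbf{Finite quadratic variation.} On $\{\langle M\rangle_\infty<\infty\}$, the time change stays in the compact interval $[0,\langle M\rangle_\infty]$. Hence $M_t$ converges to the finite limit $B_{\langle M\rangle_\infty}$, using continuity of $B$. Equivalently, one can cite the standard fact that a continuous local martingale converges a.s. on this event. Either way $M_t/t \to 0$, so \eqref{eq:martingale-limsup} holds on this event.

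\textbf{Infinite quadratic variation.} On $\{\langle M\rangle_\infty=\infty\}$, the map $t\mapsto\langle M\rangle_t$ is continuous and nondecreasing with range all of $[0,\infty)$. Since $\limsup_{s\to\infty} B_s = +\infty$ a.s., there are times $s_j\to\infty$ with $B_{s_j}>0$, and each can be written $s_j=\langle M\rangle_{t_j}$ with $t_j\to\infty$. This gives $M_{t_j}>0$ along a sequence $t_j\to\infty$, so $\limsup_{t \to \infty} M_t/t\ge 0$ on this event as well. Notice that we never need $M_t/t$ to go to zero here: mere positivity along a subsequence suffices, because the claimed bound is only the weak inequality $\ge 0$.

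\textbf{Main obstacle.} The only delicate point is justifying the Dambis--Dubins--Schwarz representation when $\langle M\rangle_\infty$ may be finite, which is where the enlargement of the probability space enters. I would either quote it in that form from Revuz--Yor, or avoid it entirely. The self-contained alternative is to use a.s. convergence on $\{\langle M\rangle_\infty<\infty\}$ together with $\limsup_t M_t=+\infty$ a.s. on $\{\langle M\rangle_\infty=\infty\}$, which is also a standard statement in Revuz--Yor.
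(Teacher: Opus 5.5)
Your proposal is correct and follows essentially the same route as the paper. You split on whether $\langle M\rangle_\infty$ is finite, use a.s.\ convergence of $M_t$ in the finite case, and in the infinite case use the Dambis--Dubins--Schwarz time change to find $t_j\to\infty$ with $M_{t_j}\ge 0$; the only differences are that the paper picks zeros $B_{s_j}=0$ where you pick $B_{s_j}>0$, and that you treat the two cases as events and address applying Dambis--Dubins--Schwarz when $\langle M\rangle_\infty$ is not a.s.\ infinite, a point the paper passes over.
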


\begin{proof}
If $\langle M\rangle_\infty<\infty$, then $M_t$ converges almost surely to a finite random variable, and hence $M_t/t\to0$.

Suppose $\langle M\rangle_\infty=\infty$. By the Dambis--Dubins--Schwarz theorem, there is a standard one-dimensional Brownian motion $B$ such that
\begin{equation}\label{eq:DDS}
 M_t=B_{\langle M\rangle_t}.
\end{equation}
Almost surely, there exists a sequence $s_j\to\infty$ such that $B_{s_j} = 0$ (for example, one can take $s_j \coloneqq \inf\{t \geq s_{j-1} + 1 \mid B_t = 0\}$ and $s_0 = 0$). Since $t\mapsto\langle M\rangle_t$ is continuous and increasing, for each $j$ there is $t_j$ with
\[
 \langle M\rangle_{t_j}=s_j.
\]
Since $s_j \to \infty$, we have $t_j\to\infty$, and \eqref{eq:DDS} gives $M_{t_j}=0$. Therefore \eqref{eq:martingale-limsup} holds.
\end{proof}

The following is a consequence of the multiplicative ergodic theorem \cite{met-oldest}:

\begin{lemma}\label{lem:met}
    For every nonzero initial condition $\theta_0 \in L^2_0(\T^2)$, the limit in \eqref{eq:lambda-kappa-def} exists almost surely in $[-\infty,0]$.
\end{lemma}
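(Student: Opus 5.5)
The plan is to recast the Fourier-coefficient dynamics \eqref{eq:dv-eq-1} as a linear stochastic flow (a random dynamical system, i.e.\ a linear cocycle) on the separable Hilbert space $H$, and then invoke the Hilbert-space version of the multiplicative ergodic theorem \cite{met-oldest}. Since the noise is white in time and the coefficients do not depend on $t$, the driving system is the Wiener shift on the canonical two-sided path space of $(W^k)_{k\in I}$. This shift preserves Wiener measure and is ergodic.

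\textbf{Step 1: the flow is a compact, measurable linear cocycle.} Each fiber of \eqref{eq:dv-eq-1} couples only modes in $n+\operatorname{Span}_\Z(I)$. I would solve the system by Galerkin truncation on balls $\{|n|\le N\}$ and pass to the limit. Uniqueness in $L^2$ comes from the energy identity \eqref{eq:ln-theta}: the Stratonovich transport term is skew and does not change $\|\theta\|_{L^2}$, so the solution map contracts the norm. This produces a linear solution operator $\Phi(t,\omega)$ with the following properties:
\begin{itemize}
\item it depends measurably on $\omega$ and strongly continuously on $t$;
\item $\|\Phi(t,\omega)\|\le 1$;
\item it satisfies the cocycle identity $\Phi(t+s,\omega)=\Phi(t,\vartheta_s\omega)\Phi(s,\omega)$.
\end{itemize}
For compactness of $\Phi(t,\omega)$ when $t>0$, I would use the parabolic smoothing $\kappa\int_0^t\|\nabla\theta_s\|^2\,\dd s\le\|\theta_0\|^2/2$. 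This shows that $\Phi(t,\omega)$ maps bounded sets into sets whose $H^1$ norm is bounded at some time in $[0,t]$, and the contraction property then carries this forward to time $t$. Since $\log^+\|\Phi(1,\omega)\|=0$, the integrability hypothesis holds trivially.

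\textbf{Step 2: apply the MET.} Ruelle's infinite-dimensional MET, in the version for compact cocycles on a separable Hilbert space, gives almost surely a filtration by closed finite-codimension subspaces with exponents $\lambda_1>\lambda_2>\cdots$, possibly together with $-\infty$. The theorem gives, for each fixed $\theta_0$ and each $\omega$ in a full-measure set $\Omega'$ that does not depend on $\theta_0$, existence of $\lim_{t\to\infty}\frac1t\ln\|\Phi(t,\omega)\theta_0\|$ in $[-\infty,0]$. The upper bound $0$ is immediate from the contraction property.

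\textbf{Main obstacle.} The MET is usually stated for discrete time, so I must pass from integer times to continuous time. I would control the oscillation $\sup_{s\in[0,1]}\bigl|\ln\|\Phi(n+s)\theta_0\|-\ln\|\Phi(n)\theta_0\|\bigr|$. The upper direction is free, again from the contraction property. The lower direction needs a bound of the form $\|\Phi(s,\vartheta_n\omega)x\|\ge c(\vartheta_n\omega)\|x\|$ on the relevant Oseledets subspace, with $\frac1n\ln c(\vartheta_n\omega)\to0$. Here I would take a cleaner route and apply the MET directly to the continuous-time cocycle, using the continuous-time version in \cite{met-oldest}. Its standard hypothesis, $\sup_{0\le s\le1}\log^+\|\Phi(s,\omega)\|\in L^1$, is trivially satisfied because $\|\Phi\|\le1$.

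When $\lambda_\kappa=-\infty$, the claimed interval includes that value, so this case needs no separate treatment.
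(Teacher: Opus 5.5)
Your skeleton matches the paper's: a contracting cocycle of compact operators over the Wiener shift, an MET for compact operators whose integrability condition is trivial because $\|\Phi\|\le 1$, then passage to continuous time. The gap is in the one step you argue yourself, compactness, which fails as written. The bound $\kappa\int_0^t\|\nabla\theta_s\|^2\,\dd s\le\frac12\|\theta_0\|^2$ does give, for each $\theta_0$ in the unit ball, a time $s=s(\theta_0)\in[0,t]$ with $\|\nabla\theta_s\|^2\le(2\kappa t)^{-1}$. But the $L^2$ contraction cannot carry an $H^1$ bound from time $s$ to time $t$. The transport term stretches gradients: without diffusion, $\theta_t=\theta_0\circ\phi_t^{-1}$ and $\|\nabla\theta_t\|$ can grow exponentially. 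So you would need a separate pathwise estimate of the form $\sup_{0\le s\le t}\|\Phi(t-s,\vartheta_s\omega)\|_{H^1\to H^1}<\infty$.

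Alternatively, you could note that $\Phi(t,\omega)$ maps the unit ball into the image of the compact set $[0,t]\times\{x:\|x\|_{H^1}\le(2\kappa t)^{-1/2}\}$ under $(s,x)\mapsto\Phi(t-s,\vartheta_s\omega)x$. That image is compact if the map is jointly continuous. The contraction gives equicontinuity in $x$, but continuity in the starting time $s$ is a pathwise property of the two-parameter stochastic flow. It must hold for all $s$ simultaneously and a.e.\ $\omega$, and it needs its own proof. Either way, this is the content of \cite[Proposition B.1]{fourmodes}, which the paper cites rather than reproving.

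Your ``main obstacle'' is also misdiagnosed. You do not need any lower bound on $\Phi(s,\vartheta_n\omega)$ on Oseledets subspaces. By \eqref{eq:ln-theta}, $t\mapsto\|\Phi(t,\omega)\theta_0\|$ is nonincreasing, so for $t\in[n,n+1]$ you have $\ln\|\Phi(n+1,\omega)\theta_0\|\le\ln\|\Phi(t,\omega)\theta_0\|\le\ln\|\Phi(n,\omega)\theta_0\|$. Dividing by $t$ transfers the integer-time limit, finite or $-\infty$, directly to continuous time. The lower direction comes from comparison with the later integer time $n+1$, not from any invertibility. This is exactly the paper's final step, so the discrete-time MET suffices.

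Invoking a continuous-time MET is legitimate but unnecessary. Such versions typically also assume integrability of $\sup_{0\le s\le 1}\log^+\|\Phi(1-s,\vartheta_s\omega)\|$, which is the quantitative form of this same sandwich and is again trivial here.
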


\begin{proof}
Define the linear map $\Phi(t,\omega): L^2_0(\T^2) \to L^2_0(\T^2)$ by $\Phi(t,\omega) \theta_0 = \theta_t(\omega)$, where $\theta_t$ solves \eqref{eq:white-in-time} with the initial condition $\theta_0$. Here, $\omega \in \Omega \coloneqq C([0,\infty),H_I)$ (recall \eqref{eq:h-i-def}) equipped with the Wiener measure, meaning $W^k_t(\omega) = \omega(t)_k$ for $k \in I$. The shift map $\tau(\omega) = \omega(1+\cdot) - \omega(1)$ is measure-preserving and
\[\Phi(n,\omega) = \Phi(1,\tau^{n-1}\omega)\cdots \Phi(1,\omega)\,.\]
Let $T(\omega)\coloneqq \Phi(1,\omega)$, and note that $\omega\mapsto T(\omega)$ is strongly measurable because $\omega \mapsto \theta_1(\omega)$ is measurable for fixed $\theta_0$, $L_0^2(\T^2)$ is separable, and $\|T(\omega)\| \leq 1$ by \eqref{eq:ln-theta} (indeed, the measurability of $\omega \mapsto T(\omega)\theta$ on a countable dense set of $\theta$'s can be extended using $\|T(\omega)\| \leq 1$ to obtain strong measurability of $\omega \mapsto T(\omega)$). $\|T(\omega)\| \leq 1$ also implies that $\E[\ln^+ \|T(\omega)\|] = 0 < \infty$. By \cite[Proposition B.1]{fourmodes}, $T(\omega)$ is compact for almost every
$\omega$. Therefore, the multiplicative ergodic theorem for compact operators
\cite[Corollary 2.2]{met-old} implies that, for every fixed
$\theta_0\in L^2_0(\mathbb{T}^2)\setminus\{0\}$, almost surely the limit
\[
\lambda(\omega,\theta_0)
:=
\lim_{n\to\infty}
\frac{1}{n}\log\|\Phi(n,\omega)\theta_0\|_{L^2}
\]
exists in $[-\infty,0]$. To pass from integer times to arbitrary times, simply note that $t\longmapsto \|\Phi(t,\omega)\theta_0\|_{L^2}$
is nonincreasing by \eqref{eq:ln-theta}.
\end{proof}

\section*{Acknowledgments}
    The author thanks Keefer Rowan and Wenhao Zhao for insightful discussions on this topic, and Martin Hairer for helpful suggestions to improve the presentation of the paper.
\section*{AI Statement}
    The author asked ChatGPT (OpenAI) to prove a uniform lower bound on $\lambda_\kappa$, and it suggested ideas which now appear in \Cref{lem:S-prop} and \Cref{lem:main-lem}. In particular, it defined $S = \big\{\sum_{k \in I} a_k k \mid a_k \in \{-1,0,1\}\big\} \setminus \{0\}$ and proved a lower bound on $\limsup_{t \to \infty} \frac{1}{t} \ln \|\theta_t\|_{L^2}$ (but not $\lambda_\kappa$ itself) for initial data with $P_S \theta_0 \neq 0$, citing ideas from \cite{fourmodes}. The author subsequently verified and reworked the proofs to be more streamlined, and, also using ideas from \cite{fourmodes}, extended the result to apply to $\lambda_\kappa$ (\Cref{lem:old-thm}). By developing the current family of sets $S$ in \eqref{eq:S-def} and the arguments which appear in \Cref{sec:proof-section}, the author formulated and proved \Cref{thm:main}, which handles arbitrary initial data (removing the assumption $P_S \theta_0 \neq 0$) and shows how the algebraic structure of $I$ affects the behavior of $\lambda_\kappa$.
After further prompting, AI also formulated the example in \Cref{rem:frobenius-failure}, and a later GPT model separately suggested the idea behind \Cref{rem:alternative-proof}.
AI was also used to catch small errors and generate \Cref{fig:rank-one-coset}. The author independently verified all mathematical arguments, wrote the final manuscript, and takes full responsibility for the mathematical content and citations.

\end{document}